\documentclass{amsart}
\usepackage{amssymb,amsmath, amsthm,latexsym}
\usepackage{graphics}
\usepackage{psfrag}
\usepackage{tikz-cd}
\usepackage[utf8]{inputenc}
\usepackage{amscd}
\usepackage{graphicx}
\newcommand{\cal}[1]{\mathcal{#1}}
\theoremstyle{plain}
\newtheorem*{theo}{Theorem}

\newtheorem*{prop}{Proposition}
\newtheorem*{conj}{Conjecture}
\newtheorem{lemma}{Lemma}[section]

\newtheorem{proposition}[lemma]{Proposition}
\newtheorem{corollary}[lemma]{Corollary}
\theoremstyle{definition}

\newtheorem{remark}[lemma]{Remark}
\let\egthree=\phi
\let\phi=\varphi
\let\varphi=\egthree




\begin{document}
\title{Some topological properties of surface bundles}
\author{Ursula Hamenst\"adt}
\thanks{Partially supported by the Hausdorff Center Bonn\\
AMS subject classification: 57R22, 57R20}
\date{August 10, 2020}


\begin{abstract}
  We describe the second integral cohomology group
  of a surface bundle as the group of Chern classes
  of fiberwise holomorphic complex line bundles and use this to obtain
  some new information on this group.
\end{abstract}

\maketitle

\section{Introduction}

A \emph{surface bundle over a surface} is a smooth 
closed 4-manifold $E$ which fibers over a closed oriented surface $B$ 
of genus $h\geq 0$,
with fiber a closed oriented surface $S_g$ of genus $g\geq 0$. 

Assume that $g,h\geq 2$. Then the monodromy of the
bundle determines a homomorphism $\rho$ of the fundamental
group $\pi_1(B)$ of $B$ into the 
\emph{mapping class group
$\Gamma_g$} of $S_g$, that is, the group of isotopy
classes of orientation preserving diffeomorphisms of $S_g$. Thus 
the geometry and topology of surface bundles over surfaces is intimately
related to properties of the mapping class group. 

Natural topological invariants of such 
surface bundles $E$ 
are the \emph{Euler characteristic} $\chi(E)$ 
and the \emph{signature} $\sigma(E)$.  
For the Euler
characteristic we have
\[\chi(E)=\chi(B)\chi(S_g)=(2h-2)(2g-2).\]
The signature can be computed as follows. 

The tangent bundle $\nu$ 
of the fibers of the surface bundle,
called the \emph{vertical tangent bundle}
in the sequel, is a real two-dimensional 
oriented smooth subbundle of the tangent bundle $TE$ of $E$. Hence it 
can be equipped with the structure of a complex line bundle.

Choose a smooth Riemannian metric on 
$TE$ and let 
$\nu^\perp$ be the orthogonal complement of $\nu$ in $TE$ for this
metric. Then the projection $\Pi:E\to B$ maps each fiber of $\nu^\perp$
isomorphically onto a fiber of $TB$ and hence as a smooth vector bundle,
$\nu^\perp$ is isomorphic to the bundle $\Pi^*TB$. 
Now $TB$ can be equipped with the structure of a complex line bundle
as well, and 
\[TE=\nu\oplus \Pi^*TB\]
is the direct sum of two complex line bundles (this is a decomposition
of $TE$ as a smooth vector bundle).   
In particular, the first and
second Chern class of $TE$ are defined,
and they are independent of the choices made.  

By Hirzebruch's signature theorem, the signature $\sigma(E)$ of 
$E$ equals 
\[\sigma(E)=\frac{1}{3}p_1(E)\]
where $p_1(E)$ is the first Pontryagin number of $E$. 
We then have 
\begin{equation}\label{signature}
\sigma(E)=\frac{1}{3}(c_1(E)^2-2c_2(E))=
\frac{1}{3}(c_1(E)^2-2\chi(E))\end{equation}
where as is customary, $c_1(E)^2$ and $c_2(E)$ denote Chern numbers
of $E$. 

As $TE=\nu\oplus \Pi^*TB$, the total Chern class of 
$TE$ equals 
\begin{align}
c(TE) &=(1+c_1(\nu))\cup (1+c_1(\Pi^*TB))\notag \\
 &=
1+c_1(\nu)+c_1(\Pi^*TB)+c_1(\nu)\cup c_1(\Pi^*TB)\notag
\end{align}
and hence sicne $c_1(\Pi^*TB)\cup c_1(\Pi^*TB)(E)=0$ we have 
\begin{equation}\label{prod}
3\sigma(E)=c_1(\nu)\cup c_1(\nu)(E).\end{equation}

Let ${\cal M}_g$ be the \emph{moduli space of complex curves of genus $g$}, that,
is the moduli space of complex structures on $S_g$ up to biholomorphic 
equivalence, 
and let ${\cal U}\to {\cal M}_g$ be the \emph{universal curve} whose 
fiber over a point $X\in {\cal M}_g$ is just the complex curve $X$. 
Let us now assume that $E$ is a \emph{Kodaira fibration}, that is, $E$ is 
a complex manifold, and the complex structures of the fibers vary nontrivially.  
This is equivalent to stating that there is a complex structure on the base $B$, and
there is 
a nonconstant holomorphic map $\phi:B\to {\cal M}_g$
 such that $E=\phi^*{\cal U}$.
 By the classification of complex surfaces, a Kodaira fibration is of general type
and hence projective and K\"ahler. 

The Miaoka inequality for complex surfaces $Y$ of general type 
states that 
$c_1^2(Y)\leq 3c_2(Y)$. 
Therefore by equation (\ref{signature}) which is valid for all 
closed oriented 4-manifolds, we have 
\[3\vert \sigma(Y)\vert \leq \vert \chi(Y)\vert.\]
Equality holds if and only if $Y$ is a quotient of the ball.
On the other hand, Kapovich \cite{Ka98} showed that  
no surface bundle over a surface
is a quotient of the ball and hence we always have
$3\vert \sigma(E)\vert < \vert \chi(E)\vert $ for all 
Kodaira fibrations $E$. It is also known that the 
signature of a Kodaira fibration does not vanish. 

For surface bundles over surfaces
which do not admit a complex structure, much less is known
about the relation between signature and Euler characteristic. 
The most general result to date seems to be  
 a theorem of Kotschick \cite{K98}. Using Seibert Witten invariants, 
 he showed 
 \begin{equation}\label{kotschick}
 2\vert \sigma(E)\vert \leq \vert \chi(E)\vert \end{equation}
for all surface bundles over surfaces. 
If $E$ admits an Einstein metric,
then the stronger inequality
\[3\vert \sigma(E)\vert <\vert   \chi(E)\vert \]
holds true, generalizing the Miaoka inequality for Kodaira fibrations. 
The following conjecture was formulated among others in \cite{K98}.

\begin{conj}\label{conjecture}
$3\vert \sigma(E)\vert \leq \vert \chi(E)\vert$ holds true for all surface
bundles over surfaces. 
\end{conj}

Perhaps the motivation for this conjecture stems from the general belief that
aspherical smooth closed 4-manifolds should admit an Einstein metric. 

The conjecture can be 
viewed as a twisted higher dimensional version
of the Milnor-Wood inequality for the Euler number of a flat
circle bundle over a closed oriented surface. 
Namely, call 
a circle bundle $H\to M$ 
over a manifold $M$ (or any CW-complex) 
\emph{flat} 
\cite{M58,W71} 
if the following holds true. Let ${\rm Top}^+(S^1)$ be
the group of orientation preserving homeomorphisms
of the circle $S^1$. We require that there is a homomorphism
$\eta:\pi_1(M)\to {\rm Top}^+(S^1)$ such that
\[H=\tilde M\times S^1/\pi_1(M)\]
where $\tilde M$ is the universal covering of $M$ and 
$\pi_1(M)$ acts on
$\tilde M\times S^1$ via
\[(x,t)g=(xg,\eta(g)^{-1}(t)).\]
 The same definition applies if $M$ is a good 
orbifold, that is, $M$ is the quotient of a
smooth simply connected manifold $\tilde M$ by 
the action of a group of 
diffeomorphisms which acts properly discontinuously,
but not necessarily freely.

The celebrated Milnor Wood inequality
bounds the absolute value of the Euler number (or first Chern class) 
of a complex line bundle with flat circle 
subbundle over a closed surface by the absolute value of the Euler characteristic
of the surface \cite{M58,W71}.

Now as was pointed out by Morita \cite{Mo88}, 
the circle subbundle of the vertical tangent bundle of a
surface bundle $\Pi:E\to B$ over an arbitrary smooth base $B$ is flat.
In this vein, the conjecture predicts a twisted
higher dimensional analog
of the Milnor Wood inequality.

The goal of this article is to provide a geometric perspective on
the topology of surface bundles over a surface. 
We begin with summarizing some 
constructions
of Kodaira fibrations
in Section \ref{constructions}.
In Section \ref{flatcircle} we give a
geometric proof of
Morita's theorem (see also Chapter 5 of \cite{FM12}). 

A \emph{section} of a surface 
bundle $\Pi:E\to B$ is a smooth map $\sigma:B\to E$ such that
$\Pi\circ \sigma={\rm Id}$.
In Section \ref{selfintersection}
we study the self-intersection number of a section 
of a surface bundle over a surface and compute all such 
self-intersection numbers for the trivial bundle. We also
point out that Morita's theorem yields 
an elementary and purely topological 
proof of the following extension
of Proposition 1 of \cite{BKM13} (see also \cite{Bow11}) 
which was originally established using Seiberg Witten invariants.

\begin{prop}\label{evaluation}
Let $E\to B$ be a surface bundle over a surface.
Let $\Sigma$ be closed
surface and let $f:\Sigma\to E$ be a smooth map; then 
\[\vert c_1(f^*\nu)(\Sigma)\vert \leq \vert \chi(\Sigma)\vert.\]
\end{prop}

In Section \ref{cohomology} we describe 
the second integral cohomology group of a surface bundle over a smooth
base in an explicit
way as the group of first Chern classes of complex line bundles.
We apply this discussion to show an analog of a result of
Morita who computed the cohomology of a surface bundle with
rational coefficients 
(Proposition 3.1 of \cite{Mo87}). The following is also related to the work 
of Harer \cite{H83} who computed for $g\geq 5$ the second homology group of 
${\cal M}_g$ with integral coefficients (see also the more recent and more
complete account in \cite{KS03}). 

\begin{theo}
Let $E\to B$ be a surface bundle with fiber genus $g$. If $E$ admits a section then
there exists an integral class $e\in H^2(E,\mathbb{Z})$ such that
$(2g-2)e=c_1(\nu)$, and 
\[H^p(E,\mathbb{Z})\equiv H^p(B,\mathbb{Z}) \oplus 
H^{p-1}(B;H^1(S_g,\mathbb{Z})) \oplus eH^{p-2}(B,\mathbb{Z})\]
\end{theo}

In view of the result of Chen and Salter \cite{CS18} that the pullback of the 
universal curve to any finite orbifold cover of ${\cal M}_g$ does not admit
a section, it seems that most surface bundles do not admit sections. 

\bigskip
\noindent 
{\bf Acknowledgement:} The completion of this work was 
supported by the National Science Foundation under Grant No. 1440140, while the author was in residence at the Mathematical Sciences Research Institute in Berkeley, California, during the fall semester of 2019.

\section{Constructions}\label{constructions}

In this section we review some constructions of Kodaira fibrations 
from the literature.

The best known construction method of Kodaira fibrations
goes back to an idea of  
Atiyah and Kodaira. Their examples are 
 branched covers over
a product of two complex curves, and they fiber in two different ways.  
A variation of this idea was used by 
Bryan and Donagi \cite{BD02} to show that 
for any integers $h,n\geq 2$, there exists a connected algebraic surface
$X_{h,n}$ of signature $\sigma(X_{h,n})=\frac{4}{3}h(h-1)(n^2-1)n^{2h-3}$ that 
admits two smooth fibrations $\theta_1:X_{h,n}\to C$ and 
$\theta_2:X_{h,n}\to D$ with base and fiber genus
$(b_i,f_i)$ equal to 
\begin{align}\label{basefiber}
(b_1,f_1)&= (h,h(hn-1)n^{2h-2}+1)\text{ and }\notag\\
(b_2,f_2)&=(h(h-1)n^{2h-2}+1,hn) \notag
\end{align}
respectively.
Note that the smallest
fibre genus of the surfaces in the above family equals 4.

Taking $n=h=2$, we conclude that there is a surface bundle
with fiber genus $4$ and base genus $9$ with 
\[\frac{\sigma(E)}{\chi(E)}=\frac{16}{96}=\frac{1}{6}.\]
According to my knowledge, 
this is the example with the largest known ratio between
signature and Euler characteristic. 

Complete intersections provide 
a more indirect way to construct Kodaira fibrations 
(see \cite{Ar17} for a recent discussion). 
To be more specific, call a Kodaira fibration
with fiber $S_g$ of genus $g$ \emph{generic}
if the fundamental group $\pi_1(B)$ of the base surjects onto 
a finite index subgroup of the mapping class group 
$\Gamma_g$ by the monodromy 
homomorphism $\rho$. 
Generic Kodaira fibrations
can be constructed as follows. 

The action of a diffeomorphism of $S_g$ on the first homology group
$H_1(S_g,\mathbb{Z})$ preserves the intersection form 
and only depends on the isotopy class of the diffeomorphism. Thus there exists
a surjective \cite{FM12} homomorphism 
\[\Psi:\Gamma_g\to {\rm Sp}(2g,\mathbb{Z}).\] 
For every $n\geq 3$, the kernel
of the induced homomorphism $\Gamma_g\to {\rm Sp}(2g,\mathbb{Z}/n\mathbb{Z})$
is torsion free and determines the \emph{fine moduli space of genus $g$ curves
  with level $n$ structure} ${\cal M}_{g}[n]$, which is a complex manifold.

Let ${\cal M}_{g}[n]^*$ be the Satake compactification of 
${\cal M}_{g}[n]$. If $g\geq 3$, then the  boundary
${\cal M}_{g}[n]^*-{\cal M}_{g}[n]$ has complex codimension at least 2.  
Therefore a curve $C\subset {\cal M}_{g}[n]^*$ given as an intersection of 
general ample divisors lies entirely in ${\cal M}_{g}[n]$. By the weak
Lefschetz theorem, the inclusion $C\to {\cal M}_{g}[n]$ induces
a surjection of fundamental groups. As the fundamental group of
${\cal M}_{g}[n]$ is a finite index subgroup of the mapping class group, 
the restriction to $C$ 
 of the universal curve defines a generic Kodaira fibration.

The Atiyah Kodaira  
examples which are branched covers over the product of two complex curves
are not generic. Namely, if $E\to B$ is a generic Kodaira fibration, then the image
of the monodromy group under the homomorphism $\Psi$ 
is a Zariski dense subgroup of $Sp(2g,\mathbb{R})$. However, if $E\to B$ is  
an Atiyah Kodaira example, then the
group $\Psi(\rho(\pi_1(B)))$ fixes a symplectic plane in 
$H^1(B,\mathbb{R})$ and hence by duality, $\Psi(\rho(\pi_1(B)))$ is not Zariski dense in 
$Sp(2g,\mathbb{R})$. 
Interestingly, Flapan \cite{Fl17} 
used complete intersections to construct Kodaira fibrations with fiber of genus 3
which also have this property. She also classified all $\mathbb{Q}$-algebraic subgroups of 
$Sp(6,\mathbb{R})$ which arise as the smallest algebraic group containing the 
image of the monodromy group of a Kodaira fibration with fiber of genus $3$.

On the other hand,
Bregman \cite{Br18} established that variations of 
the Atiyah Kodaira construction may
in some sense be universal for Kodaira fibrations whose monodromy 
fixes a symplectic plane in $H^1(B,\mathbb{R})$. 
He showed that if the dimension of the fixed point set
of the mondromy of a Kodaira fibration $E$
acting on the holomorphic one-forms of a fixed fiber equals $d$ for some
$1\leq d\leq 2$, then 
there exists a genus $d$ curve $D$ and a ramified covering $F:E\to D\times B$
inducing an isomorphism on first cohomology with rational coefficients. 

There are also explicit constructions of surface bundles over surfaces with
nontrivial signature which do not admit a complex structure
\cite{B11}. The fiber genus of such a surface bundle is
at least 4. The article \cite{EKS02} constructs surface bundles over
surfaces with positive signature for any fiber genus $g\geq 3$.
In contrast, the signature
of a surface bundle over a surface with fiber genus 2 always vanishes.
This follows from the fact that the second cohomology group 
$H^2(\Gamma_g,\mathbb{Z})$ is isomorphic to 
$H_2(\Gamma_g,\mathbb{Z})/{\rm torsion}$ (see p.158 of \cite{FM12}), on the
other hand we have $H_2(\Gamma_2,\mathbb{Z})=\mathbb{Z}/2\mathbb{Z}$
\cite{KS03}.

\section{Flat circle bundles}\label{flatcircle}

Consider the universal curve
$\Pi:{\cal U}\to {\cal M}_g$  
over the moduli space ${\cal M}_g$  
of genus $g$ curves. Its fiber over a 
point $X\in {\cal M}_g$ is just the complex curve 
$X$. The tangent bundle $\nu$ of the fibers of this bundle
is a holomorphic complex line bundle on the complex orbifold
${\cal U}$.

The following observation (which is due to Morita \cite{Mo88})
is based on some facts which 
were probably already known to Nielsen. 

\begin{proposition}\label{flatagain}
The circle subbundle of the bundle $\nu\to {\cal U}$ is flat.
\end{proposition}
\begin{proof} Let $\Gamma_{g,1}$ be the mapping class group of a surface of 
genus $g$ with one marked point (puncture), and denote by 
$\Theta:\Gamma_{g,1}\to \Gamma_g$ the homomorphism induced by the puncture forgetful map.
This homomorphism fits into the 
\emph{Birman exact sequence} \cite{B74,FM12}
\[1\to \pi_1(S_g)\to \Gamma_{g,1}\xrightarrow{\Theta} \Gamma_g\to 1.\]
Via this sequence, the group $\Gamma_{g,1}$ is the orbifold fundamental group of the 
universal curve.

We claim that the group $\Gamma_{g,1}$ 
admits an action on the circle $S^1$ by orientation preserving
homeomorphisms, where
we view $S^1$ as the ideal boundary $\partial {\mathbb{H}^2}$ of the hyperbolic plane
$\mathbb{H}^2$. 
Namely, let $x\in S_g$ be a fixed point. 
The group $\Gamma_{g,1}$ can be viewed as the group of 
isotopy classes of orientation preserving
diffeomorphisms of the surface $S_g$ preserving
$x$. Isotopies are also required to fix $x$. 
Any orientation preserving 
diffeomorphism $f$ of $S_g$ which fixes $x$ 
induces an automorphism $f_*\in {\rm Aut}(\pi_1(S_g,x))$, and
since the group of diffeomorphisms of $S_g$ isotopic to the identity
is contractible, the isotopy class of $f$ is uniquely determined by the induced
map $f_*$. 

The group $PSL(2,\mathbb{R})$ is just the group of 
orientation preserving isometries of the hyperbolic plane $\mathbb{H}^2$, or, 
equivalently, it is the group of biholomorphic automorphisms of the
unit disk $D\subset \mathbb{C}$. 
The choice of a hyperbolic structure on $S_g$ then 
determines the conjugacy class of an
embedding $\pi_1(S_g)\to PSL(2,\mathbb{R})$,
with discrete cocompact image.

Since the group $PSL(2,\mathbb{R})$ acts simply transitively
on the unit tangent bundle $T^1\mathbb{H}^2$ of the hyperbolic plane,
we can choose an identification of $PSL(2,\mathbb{R})$ with
$T^1\mathbb{H}^2$ which maps the identity to the point 
$0\in D=\mathbb{H}^2$. We also may assume that $0$ is a preimage of 
the point $x\in S_g$. With these identifications, the
group $\pi_1(S_g,x)$ determines an embedding 
$\pi_1(S_g,x)\to PSL(2,\mathbb{R})$, unique up to conjugation with
the central subgroup $SO(2)\subset PSL(2,\mathbb{R})$, that is, the stabilizer of 
the basepoint $0$. Fix once and for
all such an embedding.

A diffeomorphism $f$ of $S_g$ fixing $x$ is a 
bilipschitz map for the
hyperbolic structure of $S_g$. 
Thus $f$ can be lifted to a 
$\pi_1(S_g,x)$-equivariant bilipschitz map 
$\tilde f:\mathbb{H}^2=D\to D$ which fixes $0$. This means that
the map $\tilde f$ satisfies $\tilde f(\psi y)=f_*(\psi)(\tilde f(y))$ for all 
$y\in D$ and all $\psi\in \pi_1(S_g,x)\subset PSL(2,\mathbb{R})$.
Equivariance and the requirement that $\tilde f(0)=0$ determines
the lift $\tilde f$ completely.

Now any bilipschitz map of the hyperbolic plane which fixes the point $0$ 
maps geodesic
rays beginning at $0$ 
to uniform quasi-geodesic rays issuing from the same point.  
Such a uniform quasi-geodesic ray is at uniformly bounded distance from a 
geodesic ray, and this geodesic ray is unique if its starting point  
is required to be the fixed point $0$. 
As the ideal boundary $\partial D=S^1$ 
of the hyperbolic plane is just the set
of geodesic rays issuing from $0$, this shows that the map
$\tilde f$ induces a 
homeomorphism $\Upsilon(f)\in {\rm Top}^+(S^1)$. Here preservation 
of orientation of $\Upsilon(f)$ follows from preservation of orientation of $f$. 
The homeomorphism $\Upsilon(f)$  
only depends on the isotopy class
of $f$ provided that such an isotopy fixes the point $x$.

By construction, if $u$ is another 
orientation preserving bilipschitz
homemorphism of $S_g$ fixing $x$, then 
$\Upsilon(u\circ f)=\Upsilon(u)\circ \Upsilon(f)$. As a consequence,
the assignment $f\to \Upsilon(f)$ which associates to a diffeomorphism
$f$ of $S_g$ fixing $x$ the homeomorphism $\Upsilon(f)\in {\rm Top}^+(S^1)$ 
descends to a homomorphism $\hat \Upsilon:\Gamma_{g,1}\to {\rm Top}^+(S^1)$,
unique up to conjugation in ${\rm Top}^+(S^1)$. 
This homomorphism then defines a flat circle bundle 
$H\to {\cal U}$. 
We claim that this circle bundle is (up to equivalence)
the circle subbundle of the
vertical tangent bundle of the surface bundle ${\cal U}$.

Namely, consider the space ${\cal T}(S_{g}^1)$
 of all discrete faithful orientation preserving
homomorphisms $\rho:\pi_1(S_g,x)\to PSL(2,\mathbb{R})$. 
The group $PSL(2,\mathbb{R})$ acts on this space by conjugation.
Each orbit of this action 
is a fiber of the bundle ${\cal T}(S_{g}^1)\to {\cal T}(S_g)$ where ${\cal T}(S_g)$ denotes
the Teichm\"uller space of marked complex structures on the closed oriented
surface $S_g$ of genus $g$. 
The quotient of ${\cal T}(S_g^1)$ by the action of 
the central circle group $SO(2)=S^1$ is the Teichm\"uller space 
${\cal T}(S_{g,1})$ of all marked complex structures on an oriented surface 
$S_{g,1}$ of genus $g$ with one marked point.

The circle bundle $PSL(2,\mathbb{R})=T^1\mathbb{H}^2\to \mathbb{H}^2$ 
has a $PSL(2,\mathbb{R})$-equivariant
identification with
$\mathbb{H}^2\times S^1$ where the action of $PSL(2,\mathbb{R})$
on $S^1=\partial \mathbb{H}^2$ is described as follows. 

For a unit tangent vector $u\in T^1\mathbb{H}^2$
let $\gamma_u$ be the geodesic ray with initial velocity $u$.
The projection of the identity in $PSL(2,\mathbb{R})$ is a fixed basepoint 
$0\in D=\mathbb{H}^2$. Identify $\partial \mathbb{H}^2$ with the fiber of the unit tangent
bundle over this basepoint by associating to a unit tangent vector 
$v\in T_0{\mathbb{H}}^2$ the endpoint $\gamma_v(\infty)$ 
of the geodesic ray $\gamma_v$.
For each $\alpha\in PSL(2,\mathbb{R})$, the differential 
$d\alpha(0)$ of $\alpha$ at the basepoint $0$ then induces the homeomorphism
$\gamma_v(\infty)\to \gamma_{d\alpha(v)}(\infty)$
of $\partial \mathbb{H}^2=S^1$. The thus defined map
$PSL(2,\mathbb{R})\to {\rm Top}^+(S^1)$ is 
equivariant with respect to the action of $PSL(2,\mathbb{R})$
on itself by conjugation.

Now let $f:(S_g,x)\to (S_g,x)$ be an arbitrary diffeomorphism which fixes the point $x$.
Taking a quotient by the action of $(0,\infty)$ on the tangent bundle of $S_g$ by scaling
shows that its differential induces an
isomorphism of the circle bundle $T^1S_g$ covering the
base map $f$. 
On the other hand, using the above construction,
the map $f$ induces a second isomorphism of $T^1S_g$ as follows.
Lift $f$ to a diffeomorphism $\tilde f$ of $\mathbb{H}^2$ which fixes
$0$ and is equivariant with respect to the action of
$\pi_1(S_g,x)$ and its image under the automorphism 
$f_*$. For any point $y\in \mathbb{H}^2$, map
a unit tangent vector $v\in T_y^1\mathbb{H}^2$ to the
unit tangent vector $w\in T_{\tilde f(y)}\mathbb{H}^2$ such that
$\gamma_w(\infty)=\Upsilon(f)\gamma_v(\infty)$.
As this construction depends continuously on $v$ and is equivariant
with respect to the action of $\pi_1(S_g,x)$, it descends to
an isomorphism of $T^1S_g$ covering $f$.

The circle subbundle of the vertical tangent bundle of the universal curve 
${\cal U}$ is the quotient of the vertical tangent bundle of 
${\cal T}(S_{g,1})$, that is of ${\cal T}(S_g^1)$, 
by the action of $\Gamma_{g,1}$ via the tangent map of isotopy classes of
diffeomorphisms 
fixing the basepoint $x$. Thus to show that this circle bundle
is indeed the flat bundle 
defined by the homomorphism $\hat\Upsilon:\Gamma_{g,1}\to {\rm Top}^+(S^1)$, 
it suffices to show that for any diffeomorphism $f:(S_g,x)\to (S_g,x)$,
the isomorphism of $T^1S_g$ induced by $df$ is homotopic to the isomorphism
induced by $\Upsilon(f)$.

To show that this is indeed the case
lift as before the diffeomorphism $f$ to an $f_*$-equivariant diffeomorphism $\tilde f$ of $\mathbb{H}^2$ 
fixing $0$. We deform equivariantly the tangent map $d\tilde f$ of $\tilde f$ as follows.

For a number $r>0$ and a point $y\in {\mathbb H}^2$, identify the fiber of the unit tangent bundle 
of ${\mathbb H}^2$ at $y$ with the boundary $\partial B(y,r)$ 
of the ball of radius $r$ about $y$ using the exponential
map of the hyperbolic plane. The image $\tilde f(\partial B(y,r))$ bounds a disk containing $\tilde f(y)$.
Use the inverse of the exponential map at $\tilde f(y)$ to map this circle onto the fiber of the 
unit tangent bundle of ${\mathbb H}^2$ at $\tilde f(y)$.
Doing this simultaneously for all $y\in {\mathbb H}^2$ 
defines a continuous map
$\tilde \zeta_r:T^1{\mathbb H}^2\to T^1{\mathbb H}^2$
which is equivariant with respect to 
the action of $\pi_1(S_g)$
and hence descends to a continuous map 
$\zeta_r:T^1S_g\to T^1S_g$ covering $f$. 
Clearly the maps $\zeta_r$ 
depend continuously on $r$, and as $r\to 0$, they converge to the map
induced by $df$.
Thus for all $r$, the map $\zeta_r$ is homotopic to the map induced by $df$. 

Now by construction, as $r\to \infty$ the maps $\zeta_r$ converge to the map induced by $\Upsilon(f)$.
As this construction is moreover equivariant with respect to isotopy, 
this shows that the action of $\Gamma_{g,1}$ on the vertical tangent bundle of the universal covering
of the universal curve ${\cal U}$ coincides with the action 
defined by the homomorphism $\hat \Upsilon$. 
Hence the flat circle bundle on ${\cal U}$ 
constructed from the homomorphism $\hat\Upsilon:\Gamma_{g,1}\to {\rm Top}^+(S^1)$ indeed equals
the circle subbundle of the vertical tangent bundle of ${\cal U}$. 
This is what we wanted to show.
\end{proof}

Let now $\Pi:E\to B$ be a surface bundle
over an arbitrary smooth base $B$, with fibre 
$S_g$ of genus $g\geq 2$. Any such surface bundle can
be obtained as a pull-back of the universal curve ${\cal U}\to {\cal M}_g$ 
by a smooth (in the sense of orbifolds) map 
$f:B\to {\cal M}_g$. Thus we may assume that 
the fibres of $E$ are equipped with a complex structure
varying smoothly over the base. As a consequence,
the vertical tangent bundle $\nu$ of $E$
is a smooth complex line bundle over $E$.

Since ${\cal M}_g$ is a classifying space 
(in the orbifold sense) for its (orbifold) fundamental group,
the homotopy class of a map $f:B\to {\cal M}_g$ is uniquely determined
by the induced homomorphism $f_*=\rho:\pi_1(B)\to \Gamma_g$. 
Here as before, $\Gamma_g$ is the mapping class group of $S_g$.
Furthermore, homotopic maps give rise to homeomorphic surface
bundles, so the bundle $E$ is determined by $\rho$.
We refer to \cite{Mo87} for more
details about these well known facts. 

Let as before $\Theta:\Gamma_{g,1}\to \Gamma_g$ be the natural surjective homomorphism. 
Since $E$ is the pull-back of the universal curve under the map $f$, 
there exists an
exact diagram
\begin{equation}
\begin{tikzcd}
1\arrow[r]&\pi_1(S_g) 
\arrow[r] \arrow[d] &\pi_1(E)\arrow[r]\arrow[d]&\pi_1(B)
\arrow[r]\arrow[d] & 1\\
1\arrow[r] &\pi_1(S_g) 
\arrow[r]  &\Theta^{-1}(\rho(\pi_1(B))) 
\arrow[r] & \rho(\pi_1(B))\arrow[r] &1.
\end{tikzcd} 
\end{equation}
As a consequence, there exists a homomorphism
$\pi_1(E)\to \Theta^{-1}(\rho(\pi_1(B)))\subset \Gamma_{g,1}$ whose restriction
to the subgroup $\pi_1(S_g)$ is an isomorphism. 
By naturality under pull-back, in the case that $B$ is a closed surface we obtain

\begin{corollary}\label{tangentbundle}
Let $\Pi:E\to B$ be a surface bundle over a surface. Then $TE=\nu\oplus 
\Pi^*TB$ is a sum of two complex line bundles whose circle subbundles 
are flat.
\end{corollary}
\begin{proof}
We observed before that $TE=\nu\oplus \Pi^*TB$, and by
Proposition \ref{flatagain}, the circle subbundle of $\nu$ is a pull-back of a flat bundle
and hence flat. 
On the other hand, as $B$ is a surface of genus $h\geq 2$, the
circle subbundle of the tangent bundle $TB$ of $B$ is flat as well and
hence the same holds true for the circle subbundle of the pull-back
$\Pi^*TB$. 
\end{proof}

\section{Selfintersection numbers of sections}
\label{selfintersection}

A \emph{section} of a surface bundle
$\Pi:E\to B$ is a smooth map $f:B\to E$ so that
$\Pi\circ f={\rm Id}$. 
The image $f(B)$ of a section
$f$ is a cycle in $E$ which defines a homology class
$[f(B)]\in H_k(E,\mathbb{Z})$ where $k={\rm dim}(B)$. 
In the case that $B$ is a surface, the self-intersection number
$[f(B)]^2$ of this class is defined. 
Our next goal is to shed some light on 
this self-intersection number
from a geometric point of view.

Let as before $\nu$ be the vertical line bundle of $E$, with 
first Chern class $c_1(\nu)$.
Equivalently,
$c_1(\nu)$ is the Euler class of the oriented 2-dimensional real oriented
vector bundle $\nu$. 
We note

\begin{lemma}\label{eulernumber}
$[f(B)]^2=c_1(\nu)(f(B))$ for any section $f:B\to E$.
\end{lemma}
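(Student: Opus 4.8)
The plan is to identify the normal bundle of the embedded surface $f(B)\subset E$ with the restriction $\nu|_{f(B)}$, and then invoke the standard fact that the self-intersection number of a closed oriented surface embedded in an oriented $4$-manifold equals the Euler number of its normal bundle.

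First I would record the tangent bundle splitting of $E$. Choosing a connection on the bundle $\pi:E\to B$ produces a horizontal subbundle $H\subset TE$ complementary to the vertical subbundle $\nu$, so that $TE\cong \nu\oplus H$ and $d\pi$ restricts to a fibrewise isomorphism $H\to \pi^*TB$. Next I would analyze $T(f(B))$ inside $TE|_{f(B)}$. Since $f$ is an embedding, $df:TB\to TE|_{f(B)}$ is a fibrewise injection with image $T(f(B))$, and the relation $\pi\circ f=\mathrm{Id}_B$ gives $d\pi\circ df=\mathrm{Id}_{TB}$. Under the identifications $H|_{f(B)}\cong \pi^*TB|_{f(B)}=(\pi\circ f)^*TB=TB$ this says that the composition of $df$ with the projection $TE|_{f(B)}\to H|_{f(B)}$ along $\nu$ is the identity; hence $T(f(B))$ projects isomorphically onto $H|_{f(B)}$ and is therefore a subbundle complementary to $\nu|_{f(B)}$:
\[ TE|_{f(B)}\;\cong\; \nu|_{f(B)}\oplus T(f(B)). \]
Comparing with $TE|_{f(B)}\cong T(f(B))\oplus N$, where $N$ is the normal bundle of $f(B)$ in $E$, and noting that interchanging two rank-$2$ summands preserves orientations, one gets an orientation-preserving isomorphism $N\cong \nu|_{f(B)}$.

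Finally, the self-intersection number $[f(B)]^2$ equals the Euler number $e(N)$ evaluated on the fundamental class of $f(B)$; since $N\cong \nu|_{f(B)}$ and $\nu$ is a complex line bundle, $e(N)=c_1(\nu)|_{f(B)}$, and evaluating on $[f(B)]$ — equivalently, by the projection formula, evaluating $c_1(\nu)\in H^2(E)$ on $[f(B)]\in H_2(E)$ — yields $c_1(\nu)(f(B))$. The only step that requires any care is the transversality of $T(f(B))$ to $\nu$, i.e. that $T(f(B))$ really is a horizontal complement; but this is immediate from $\pi\circ f=\mathrm{Id}_B$, and everything else is the standard correspondence between self-intersection numbers and Euler classes of normal bundles.
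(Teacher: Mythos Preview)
Your proof is correct and follows essentially the same approach as the paper: identify the normal bundle of the section with $\nu|_{f(B)}$ via the section condition $\pi\circ f=\mathrm{Id}$, then invoke the standard equality of self-intersection number and Euler number of the normal bundle. The paper's argument is simply a terser version of yours, asserting without detail that ``$f$ is a section and consequently this oriented normal bundle is isomorphic to the restriction of $\nu$ to $f(B)$''; your explicit verification that $T(f(B))$ is transverse to $\nu$ is the content behind that assertion.
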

\begin{proof} Since $f(B)$ is a smoothly  embedded surface in $E$,
the self-intersection number of $f(B)$ equals the
Euler number of the pull-back to $B$ of the 
oriented normal bundle of $f(B)$ in $E$, that is, it equals the
evaluation of the Euler class of this normal bundle on the homology class
$[f(B)]$. 

As $f$ is a section, $f(B)$ is everywhere transverse to the fibers of 
$E\to B$. Thus this oriented normal bundle is 
isomorphic to the restriction of the vertical tangent bundle 
$\nu$ of $E$.
\end{proof}

It was shown by 
Milnor \cite{M58} 
and Wood \cite{W71} that the Euler number $e(H)$ 
of a flat circle bundle $H\to B$ over a closed oriented
surface $B$ of genus $h\geq 2$ 
and the Euler characteristic 
$\chi(B)$ of $B$ satisfy the inequality 
\[\vert e(H)\vert \leq \vert \chi(B)\vert.\]
In view of this result, the conjecture
stated in the introduction can 
be viewed as a higher dimensional
analog of the Milnor Wood inequality.

By a result of Thom, 
for any compact $CW$-complex $X$, any homology class
$\alpha\in H_2(X,\mathbb{Z})$ can be represented by a map
from a closed surface into $X$, and if 
$\alpha$ is not a two-torsion class, then the surface can be
chosen to be orientable.   
As a consequence of Proposition \ref{flatagain}, we obtain

\begin{corollary}\label{milnorwood}
Let $\beta\in H_2(E,\mathbb{Z})$ be 
represented by a map $f:\Sigma\to E$ where $\Sigma$ is a closed
oriented surface. 
Then $\vert c_1(\nu)(\beta)\vert \leq \vert \chi(\Sigma)\vert$.
\end{corollary}
\begin{proof} By Proposition \ref{flatagain}, 
the pull-back by $f$ of the circle subbundle of $\nu$ is a flat circle
bundle over $\Sigma$. By naturality, we have
\[\vert c_1(\nu)(\beta)\vert =\vert f^*(c_1(\nu))(\Sigma)\vert \leq 
\vert \chi(\Sigma)\vert \]
by the Milnor Wood inequality. 
\end{proof}

As an immediate consequence, we obtain 
the following result 
of Baykur, Korkmaz and Monden 
(Proposition 1 of \cite{BKM13}) and Bowden \cite{Bow11}, bypassing the 
use of Seiberg-Witten invariants used 
to derive this statement in \cite{BKM13} and \cite{Bow11}. 

\begin{corollary}\label{selfinter}
Let $f:B\to E$ be a section of a surface bundle
$E\to B$; then 
$\vert [f(B)]^2\vert \leq \vert \chi(B)\vert$. 
\end{corollary}
\begin{proof} 
The section is defined by a smooth map 
$B\to E$ and hence the corollary follows from Lemma \ref{eulernumber} and
Corollary \ref{milnorwood}.
\end{proof}

%
%
%

Theorem 15 of \cite{BKM13} shows that 
for every $g\geq 2,h\geq 1$ and every integer $k\in [-2h+2,2h-2]$ 
there is
a surface bundle with fibre $S_g$ and base of genus $h$ 
which admits 
a section of self-intersection number $k$.
We complement this result by analyzing self-intersection numbers
of sections of the trivial bundle.

\begin{proposition}\label{trivialbundle}
Let $E\to B$ be the trivial surface bundle with fibre 
genus $g\geq 2$ and base genus $h$.
If $h<g$ then every section of $E$ has self-intersection number
zero. If $h\geq g$ then for each integer $k$ with
$h-1\geq \vert k\vert (g-1)$ 
there is a section of 
self-intersection number 
$2k(g-1)$, and no other self-intersection numbers occur.
\end{proposition}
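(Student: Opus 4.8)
The plan is to reuse the mechanism of Proposition~\ref{selfinter}: a section $f$ of the trivial bundle $E = B \times S_g$ corresponds to a map $\pi_1(B) \to \Gamma_{g,1}$ lifting the \emph{trivial} homomorphism $\pi_1(B) \to \Gamma_g$, and by the Birman exact sequence~(\ref{birman}) such a lift is precisely a homomorphism $\psi : \pi_1(B) \to \pi_1(S_g)$ (the subgroup $\pi_1(S_g) = \ker(\Gamma_{g,1} \to \Gamma_g)$). First I would record that the self-intersection number $[f(B)]^2$ equals the Euler number of the flat circle bundle over $B$ induced by the composite $\pi_1(B) \xrightarrow{\psi} \pi_1(S_g) \hookrightarrow \mathrm{Top}^+(S^1)$, where $\pi_1(S_g)$ acts on $\partial \mathbf{H}^2$ as a Fuchsian group for a chosen hyperbolic structure on $S_g$. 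Indeed $f(B)$ is homotopic inside $E$ (hence as a homology class) to the graph of a map $B \to S_g$ inducing $\psi$, and its normal bundle is the pullback of $TS_g$ under that map; so $[f(B)]^2 = \psi^*(e(TS_g))[B]$, the Euler number of the flat $S^1$-bundle associated with $\psi$ composed with the isometric action on the unit tangent bundle $T^1 S_g = \Gamma\backslash \mathrm{PSL}_2(\mathbb{R})$.

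**Computing the possible Euler numbers.**
This reduces the problem to a purely group-theoretic one: for which integers $k$ does there exist $\psi : \pi_1(B_h) \to \pi_1(S_g)$ with $\psi^*(e)[B_h] = 2k(g-1)$? The Euler class $e \in H^2(\pi_1(S_g),\mathbb{Z}) \cong \mathbb{Z}$ evaluates to $\chi(S_g) = 2-2g$ on the fundamental class, so $\psi^*(e)[B_h] = (\deg \psi)\,(2-2g)$ where $\deg\psi \in \mathbb{Z}$ is the degree, i.e.\ the integer by which $\psi_* : H_2(B_h;\mathbb{Z}) = \mathbb{Z} \to H_2(S_g;\mathbb{Z}) = \mathbb{Z}$ multiplies. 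Thus the self-intersection number is $2(\deg\psi)(g-1)$, which forces the value to lie in $2(g-1)\mathbb{Z}$, explaining the form $2k(g-1)$ with $k = -\deg\psi$. It remains to determine the achievable degrees. If $h < g$ then any $\psi$ must have degree zero: a nonzero-degree map $B_h \to S_g$ would be $\pi_1$-surjective onto a finite-index subgroup, forcing (by multiplicativity of $\chi$ under the corresponding cover and the Riemann--Hurwitz-type inequality for degree-$d$ maps between surfaces) $2h-2 \geq d(2g-2) \geq 2g-2$, contradiction; more simply, a degree-$d$ map $B_h\to S_g$ with $d\ne 0$ requires $h\ge g$ since it is $\pi_1$-injective on a finite cover and $2-2h \le d(2-2g)$. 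So for $h < g$ every section is null-homologous in the fiber direction, giving self-intersection zero.

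**The realization (hard) direction.**
The main obstacle is realizing each prescribed nonzero degree under the stated inequality $h \geq |k|(g-1)+1$. Given $k \neq 0$, set $d = |k|$; I need a degree-$d$ map $B_h \to S_g$, and such a map exists (by classical surface topology — e.g.\ pinching $B_h$ onto a wedge and mapping, or more carefully: a degree-$d$ branched cover $B_{h'} \to S_g$ exists whenever Riemann--Hurwitz permits, and then one composes with a degree-one collapse $B_h \to B_{h'}$ for $h \geq h'$) exactly when $2h - 2 \geq d(2g-2)$, i.e.\ $h \geq d(g-1)+1$; reversing orientation handles the sign of $k$. This is where I expect to spend the most care: pinning down the precise existence statement ``a map $B_h\to S_g$ of degree $d$ exists iff $h \ge d(g-1)+1$ (for $d\ge 1$)'', which is the Kneser–Edmonds theorem on degrees of maps between closed surfaces, and then verifying the orientation bookkeeping so that the sign of the degree matches the sign of $k$ in $2k(g-1)$. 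Once that is in place, take $\psi = $ the induced map on $\pi_1$ (well-defined up to the choice of a section splitting the abelianization, but only its degree matters), build the associated flat circle bundle, and invoke Lemma~\ref{eulernumber} together with the identification above to conclude that the corresponding section of the trivial bundle has self-intersection number $2k(g-1)$; and no other values occur because every section's invariant is $2(\deg\psi)(g-1)$ with $|\deg\psi| \leq (h-1)/(g-1)$ by Kneser–Edmonds (equivalently by the Milnor–Wood bound $|[f(B)]^2|\le 2h-2$ from Proposition~\ref{selfinter}, which already gives $|k|(g-1)\le h-1$).
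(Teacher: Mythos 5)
Your proof is correct and takes essentially the same route as the paper: via the Birman sequence a section of the trivial bundle is the graph of a map $\Phi:B\to S_g$, its self-intersection number is the Euler number $\deg(\Phi)\,\chi(S_g)$ of $\Phi^*TS_g$, and the set of achievable values is pinned down by the classical Kneser--Edmonds existence criterion for degree-$d$ maps $B_h\to S_g$ together with the degree bound (equivalently the Milnor--Wood estimate of Proposition \ref{selfinter}). The only blemish is a cosmetic sign slip (you state the invariant as $2(\deg\psi)(g-1)$ right after computing it as $(\deg\psi)(2-2g)$, with $k=-\deg\psi$), which is harmless since both signs of the degree are realized by composing with an orientation-reversing diffeomorphism of $S_g$.
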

\begin{proof}
Let $B$ be a surface of genus $h\geq 1$
and let $E=B\times S_g\to B$ be the trivial surface bundle.
Then a section $f:B\to E$ is just a smooth 
map of the form $x\to (x,\Phi(x))$ where 
$\Phi:B\to S_g$ is smooth. 
Let $d\in \mathbb{Z}$ be
the degree of $\Phi$. We claim that the self-intersection
number of $f$ equals $d(2-2g)$.

To see that this is the case, denoting as before by $\nu$
the vertical tangent bundle, we have 
$c_1(\nu)(f(B))=\Phi^*c_1(TS_g)(B)=d(2-2g)$. 
By Lemma \ref{eulernumber},
the self-intersection number of the section $f$ 
coincides with this Euler number.

This shows that the self-intersection number of a 
section of the trivial
bundle is a multiple of $2g-2$. The proposition now
follows from the fact that for a surface $B$ of genus $h$ and every 
$k\in \mathbb{Z}$, there exists
a smooth map $\Phi:B\to S_g$ of degree $k$ if and only if 
$\vert k\vert \leq \frac{h-1}{g-1}$.
Then $x\in B\to (x,\Phi(x))$ is a section of 
$E\to B$ with self-intersection number $k(2-2g)$. 

To show that the condition on $k$ is sufficient for the existence 
of a map $B\to S_g$ of degree $k$, note that
if $k\leq \frac{h-1}{g-1}$ is positive, then a map $ B\to S_g$ of 
degree $k$ can be 
constructed as follows. Let $\psi:\Sigma\to S_g$ be an unbranched
cover of degree $k$. The Euler characteristic of $\Sigma$ fulfills 
$\vert \chi(\Sigma)\vert =k\vert \chi(S_g)\vert 
\leq \vert \chi(B)\vert$.
Thus there is a degree one map 
$\zeta:B\to \Sigma$ which pinches
a subsurface of $B$ of genus $g^\prime$ to a point,
where $g^\prime=h-1-k(g-1)\geq 0$ \cite{Ed79}.  
The composition
$\psi\circ \zeta:B\to S_g$ has degree $k$. A map of degree $-k$
can be taken as a composition of this map 
with an orientation reversing diffeomorphism of $B$.

On the other hand, by a result of Edwards \cite{Ed79}, any
map $B\to S_g$ of degree $k\geq 10$ is homotopic to the composition of
a pinch $B\to B^\prime$ with a branched cover $B^\prime\to S_g$.
A nontrivial pinch collapses a subsurface of $B$ bounded by an
essential separating simple closed curve to a point and hence it 
strictly decreases the genus. Thus the genus $q$ of $B^\prime$ is not bigger
than the genus $h$ of $B$. 

Now if 
$B^\prime\to S_g$ is a branched cover of degree $k$ and if  
$b$ is the total number of branch points, counted with multiplicity, then by the
Hurwitz formula \cite{FK80}, 
\[2q-2=b+k(2g-2).\]
This implies that $\vert k\vert \leq \frac{q-1}{g-1}$ and hence
$\vert k\vert \leq \frac{h-1}{g-1}$.  As any map $B\to S_g$ can be precomposed
with an orientation reversing diffeomorphism of $B$ to yield a map of positive degree, 
this completes the proof of the proposition.
\end{proof}



\section{Cohomology of surface bundles}\label{cohomology}

This final section collects some results on the second cohomology group 
of a surface bundle over a base $B$
which is an arbitrary smooth closed manifold. We also give some additional
information in the case that $B$ is a surface.

The cohomology with rational coefficients of a surface bundle over a smooth base
was computed by Morita. The following is 
Proposition 3.1 of \cite{Mo87}.

\begin{proposition}\label{cohomologyprop}
  Let $\Pi:E\to B$ be a surface bundle over a smooth base $B$.
  Let $k=\mathbb{Q}$ or $\mathbb{Z}/p\mathbb{Z}$ where
  $p$ is a prime not dividing $2g-2$. 
  Then the homomorphism
$\Pi^*:H^*(B,k)\to H^*(E,k)$ is injective, and for all $q$ we have 
\[H^q(E,k)\cong H^q(B,k)
  \oplus H^{q-1}(B,H^1(S_g,k))\oplus
c_1(\nu)H^{q-2}(B,k).\]
\end{proposition}

In general, we can not hope that the proposition passes on 
to cohomology with integral coefficients. The reason is that 
for a surface bundle $E\to B$ with fiber of genus $g\geq 2$, 
the fiber inclusion $\iota:S_g\to E$ may not induce a 
surjection $\iota^*:H^2(E,\mathbb{Z})\to H^2(S_g,\mathbb{Z})=\mathbb{Z}$.
Namely, the Euler class $e\in H^2(S_g,\mathbb{Z})$ of the tangent bundle of 
$S_g$ has a $2g-2$-th root, but there may not exist such a root 
for the class $c_1(\nu)$ where as before, $c_1(\nu)\in H^2(E,\mathbb{Z})$ 
denotes the first Chern class of the
vertical tangent bundle of $E$.

In the following observation, the component $H^1(B,H^1(S_g,\mathbb{Q}))\subset 
H^2(E,\mathbb{Q})$ is as in Proposition \ref{cohomologyprop}.

\begin{proposition}\label{surfacebundles}
Let $E\to B$ be a surface bundle over a surface. Then there exists an embedding
$H^1(B,H^1(S_g,\mathbb{Z}))\to H^2(E,\mathbb{Z})$ which induces an isomorphism
$H^1(B,H^1(S_g,\mathbb{Z}))\otimes \mathbb{Q}\to H^1(B,H^1(S_g,\mathbb{Q}))\subset 
H^2(E,\mathbb{Q})$.
\end{proposition}
\begin{proof} 
The standard Leray spectral sequence for the fiber bundle 
$\Pi:E\to B$ starts with a finite good cover ${\cal U}=\{U_i\mid 1\leq i\leq k\}$ 
of $B$ consisting of open sets 
$U_i\in {\cal U}$ with the following properties.
\begin{enumerate}
\item Each set $U_i\in {\cal U}$ is diffeomorphic to an open disk $D\subset \mathbb{R}^2$.
\item Each intersection $U_i\cap U_j$ or $U_i\cap U_j\cap U_k$ 
is contractible or empty.
\item The intersection of any four distinct of the sets $U_i$ is empty.
\end{enumerate}
Such a covering can be constructed from a triangulation $T$ of $B$ as follows.

For each vertex $x$ of $T$, choose a disk neighborhood $D_x$ of $x$ so that the closures of these disks
are pairwise disjoint. We also require that each edge $e$ of $T$ intersects a disk $D_x$ if and only if
the edge is incident on $x$, and in this case, the intersection of $e$ with $D_x$ is a connected 
subarc of $e$. Furthermore, we require that a two-simplex $f$ intersects a disk $D_x$ if and only if 
$x$ is a vertex of $f$, and In this case, $D_x\cap f$ is a disk. Call these disks of vertex type. 

For each edge $e$ of $T$, choose a disk $D_e$ containing a neighborhood of $e-\cup_xD_x$. 
This can be done in such a way that the disks $D_e$ are pairwise disjoint, that 
for a vertex $x$ of $T$, the intersection $D_e\cap D_x$ is empty or a disk, and that the later holds true if and 
only if $e$ is incident on $x$. Call such a disk of edge type. The union of the disks of vertex and edge type 
covers a neighborhood of the 
one-skeleton of $T$, and the intersection of any two of these disks either is a disk or empty. 
The intersection of any three of the disks is empty. 

Finally choose a disk for each two-simplex $f$ of $T$ which is contained in $f$ and covers 
$f-\cup_eD_e-\cup_xD_x$. Call these disks of face type. They 
can be chosen in such a way that the resulting family of disks
covers $B$ and that furthermore, if the intersection of any three of the disks is non-empty, then 
these disks are of distinct type. The resulting cover ${\cal U}$ 
is called a \emph{good cover} of $B$.
The restriction of $E$ to $U_i$ is trivial for all $U_i\in {\cal U}$.

Th good cover ${\cal U}$ determines a first quadrant  double chain complex $K^{p,q}$ 
with $0\leq p\leq 2$ which can be used to compute $H^2(E,\mathbb{Z})$ using the Leray spectral
sequence for the sheaf ${\cal F}$ of locally constant $\mathbb{Z}$-valued functions on $E$.
Leray's theorem 
(see Theorem 14.18 of \cite{BT82}) shows that 
the $E_2$-term of the spectral sequence 
with coefficients $\mathbb{Z}$ has the form 
\[E_2^{p,q}=H^p({\cal U},H^q(S_g,\mathbb{Z})).\]
This spectral sequence converges to $H^*(E,\mathbb{Z})$ by the generalized Mayer-Vietoris
principle, because $\Pi^{-1}({\cal U})$ is a cover of $E$, see p.169 and Theorem 15.11 of \cite{BT82} for 
detials on these facts.

Since $K^{p,q}$ is trivial for $p\geq 3$,
for $r\geq 2$ and every $k\geq 2$ the differential
$d_r:E_k^{1,1}\to E_k^{1+r,2-r}$ vanishes. 
Since the spectral sequence converges to $H^*(E,\mathbb{Z})$, this implies that indeed we have
an embedding $H^1({\cal U},H^1(S_g,\mathbb{Z})=
H^1(B,H^1(S_g,\mathbb{Z}))\to H^2(E,\mathbb{Z})$. 

We claim that the image group is precisely the subgroup of $H^2(E,\mathbb{Z})$ whose 
cup product with $C=\Pi^*H^2(B,\mathbb{Z})\oplus c_1(\nu)H^0(B,\mathbb{Z})$ vanishes.
Note to this end that $C$ is a free subgroup of $H^2(E,\mathbb{Z})$ of rank two, and the restriction of the cup product
to $C$ is non-degenerate. Now the degree two part of the $E_2$-term of the spectral sequence decomposes
as $E_2=E_2^{2,0}\oplus E_2^{1,1}\oplus E_2^{0,2}$. The cup product defines a homomorphism
\[E_2^{2,0}\otimes E_2^{1,1}\to E_2^{3,1}=0,\] and similarly, the cup product defines a homomorphism
\[E_2^{1,1}\otimes E_2^{0,2}\to E_2^{1,3}=0.\]

As this argument is also valid with coefficients in $\mathbb{Q}$, and cup product is natural with respect to taking
tensor product with $\mathbb{Q}$, this completes the proof. 
\end{proof}

\begin{remark}
We do not know an example of a surface bundle for which the conclusion of
Proposition \ref{surfacebundles} is violated. In particular, by \cite{H83}, it holds true
for the universal curve, in fact we have
$H^1(\Gamma_g,H^1(S_g,\mathbb{Z}))=0$ for all $g$.
\end{remark}

%
%

The final goal of this article is 
to give a geometric interpretation of 
the subgroup
$H^1(B,H^1(S_g,\mathbb{Z}))$ of $H^2(E,\mathbb{Z})$ for a surface bundle
over a surface 
$\Pi:E\to B$ and prove the theorem from the introduction.
We begin with some results which hold true for an arbitrary surface bundle 
$E\to B$ over a smooth base. 
Assume as before that $E\to B$ is obtained by a
smooth map $B\to {\cal M}_g$. This means that each of the fibers of
$E$ has a complex structure varying smoothly over the base.

Abel's theorem shows that the \emph{Picard group} ${\rm Pic}(X,2g-2)$ 
of all complex line bundles 
of degree $2g-2$ over a Riemann surface $X$ can be identified with the
\emph{Jacobian} ${\cal J}(X)$ of $X$ as follows \cite{FK80,GH78}.

Choose a geometric symplectic basis $a_1,b_1,\dots,a_g,b_g$ of 
$H_1(S_g,\mathbb{Z})$. This means that
$a_i,b_i$ are oriented non-separating simple closed curves in 
$S_g$ so that $a_i,b_i$ intersect in precisely one point, and 
$a_i\cap a_j=a_i\cap b_j=b_i\cap b_j=\emptyset$ for all $i\not=j$. 
This choice then determines a basis $\omega_1,\dots,\omega_g$ of 
the $g$-dimensional \cite{FK80} complex 
vector space $H^{1,0}(X,\mathbb{C})$
of holomorphic one-forms on $X$ so that $\omega_i(a_j)=\delta_{ij}$.
The imaginary parts of $\omega_i$ are linearly independent over 
$\mathbb{R}$ and hence the one-forms 
$\omega_1,\dots,\omega_g$ determine a lattice 
$\Lambda(X)$ in $\mathbb{C}^g$, obtained by integration 
over the geometric symplectic basis 
$a_1,b_1,\dots,a_g,b_g$
of $H_1(S_g,\mathbb{Z})$.  
The quotient of $\mathbb{C}^g$ by this lattice then is the Jacobian
${\cal J}(X)$ of $X$.
The fundamental group $A$ of ${\cal J}(X)$ is isomorphic
to the integral homology group $H_1(S_g,\mathbb{Z})=\mathbb{Z}^{2g}$ of 
$S_g$, and hence using duality provided by the symplectic form, 
to the group $H^1(S_g,\mathbb{Z})$.  

If $X$ varies in a smooth family, then the holomorphic one-forms
$\omega_i=\omega_i(X)$ on $X$ defined by $\omega_i(X_i)(a_j)=\delta_{i,j}$ 
also vary smoothly. 
This means that there exists
a smooth fiber bundle $\Theta:W\to B$ whose fiber over $X$ is just the
Jacobian ${\cal J}(X)$ of $X$. 

The bundle $W$ is naturally a quotient of the \emph{Hodge bundle}, the
complex vector bundle $Z\to B$ whose fiber at a point $X\in B$ equals the complex
vector space of holomorphic one-forms on $X$. This bundle is in general not trivial as a
complex vector bundle. However, it is flat as a real vector bundle with symplectic fiber.
Namely, the action of the mapping class group of $S_g$ on the first cohomology of 
$S_g$ defines a homomorphism $\rho:\Gamma_g\to Sp(2g,\mathbb{Z})$, and the 
Hodge bundle is the bundle
\[Z=\tilde B\times H^1(S_g,\mathbb{R})/\pi_1(B)\]
where the action of $\pi_1(B)$ is defined by 
$(x,Y)g=(xg,\rho(g)^{-1}Y).$

The right action of $H^1(S_g,\mathbb{R})$ by translation commutes with the action by $\rho$ 
and hence there is a quotient bundle $W=Z/H^1(S_g,\mathbb{Z})$ whose fiber
at $X$ just equals the Jacobian $J(X)$ of $X$.

The Jacobian ${\cal J}(X)$ parameterizes divisors of degree $0$ 
on $X$ up to linear
equivalence, that is, up to adding a divisor of a meromorphic function. 
Thus ${\cal J}(X)$ can be viewed as the subgroup of the 
Picard group of $X$ parameterizing holomorphic 
line bundles of degree zero. The group structure is
given by the tensor product, with 
the trivial line bundle as the neutral element.

A \emph{section} of the bundle $\Theta:W\to B$ is a smooth map 
$\sigma:B\to W$ such that $\Theta\circ \sigma={\rm Id}$. 
Such a section 
then determines a splitting of the extension
\begin{equation}\label{splittingsequence}
1\to A\to \pi_1(W)\xrightarrow{\Theta_*} \pi_1(B)\to 1,\end{equation}
that is, for some $x\in B$ it defines a homomorphism
$\sigma_*:\pi_1(B,x)\to 
\pi_1(W,\sigma(x))$
such that $\Theta_* \circ \sigma_*={\rm Id}$.
%

The following is a a topological analog of a well known statement on group
extensions as discussed in Proposition IV.2.1 of \cite{Bro82}. Namely, if $G$ is a 
discrete group and if $A$ is any $G$-module, then $A$-conjugacy classes of splittings of the 
split extension
\begin{equation}\label{split}
1\to A\to A\rtimes G\to G\to 1
\end{equation}
are in 1-1-correspondence with the elements of $H^1(G,A)$.

In the topological setting, a conjugacy class of an element in the fundamental group 
$\pi_1(Y,y)$ of a path connected topological space $Y$ is just a free homotopy class of loops
in $Y$. Being able to move the basepoint continuously is the main difference to the setting of discrete
groups. With this in mind, the next observation gives a topological interpretation of the sequence
(\ref{split}) in our setting. Here the $G$-module $A$ is just the integral cohomology group
$H^1(S_g,\mathbb{Z})$ with the monodromy action of $\pi_1(B)$ defined by the representation $\rho$.

\begin{proposition}\label{homotopysplit}
  Homotopy classes of sections $B\to W$ form a group which is 
isomorphic to 
$H^1(B,H^1(\pi_1(S_g),\mathbb{Z}))$. 
\end{proposition}
\begin{proof} Let $\sigma:B\to W$ be a section. Then 
for some basepoint $x\in B$, the induced
homomorphism $\sigma_*:\pi_1(B,x)\to \pi_1(W,\sigma(x))$ 
defines a splitting of the extension (\ref{splittingsequence}).

Let as before $Z\to B$ be Hodge bundle with fiber
$H^1(S_g,\mathbb{R})$, viewed as an abelian group. Recall that we have
$W=Z/H^1(S_g,\mathbb{Z})$. 
For each $x\in B$, 
there is a natural
action of $H^1(S_g,\mathbb{R})$ on the fiber $W_x$ of $W$ over $x$.
We claim that if $\eta$ is another smooth section of
$W$ then $\sigma$ and $\eta$ are homotopic if and only if
there exists a smooth section $\rho$ of the bundle $Z$ so 
that $\eta=\rho(\sigma)$, where the action of $\rho$
is fiber preserving.

Namely, if $\rho$ is any section of $Z$, then using the fiberwise
group structure (or, alternatively, the fact that the fiber of $Z$
is contractible),
there is a smooth fiber preserving homotopy $h_t$ of $\rho=h_1$ to the section
$h_0$ of $Z$ 
which associates to $x\in B$ the neutral element in $H^1(S_g,\mathbb{R})=Z_x$.
Then $s\to h_s\sigma$ is a fiber preserving homotopy between 
$\sigma$ and $\rho\sigma$.

On the other hand, let us assume that $\eta$ is homotopic to $\sigma$.
Let $h_t$ be a fiber preserving homotopy connecting $h_0=\sigma$ to 
$h_1=\eta$. 
Choose a point $x\in B$ and a preimage $q\in Z_x$ of $\sigma(x)$ in the fiber $Z_x$ of 
$Z$ at $x$. The path $t\to h(t,x)=h_t(x)$ lifts to a path $\tilde h(t,x)$ 
in $Z_x$ beginning at $q$.
We can write $\tilde h(1,x)=\beta(x)+q$ for some $\beta(x)\in H^1(S_g,\mathbb{R})$ (here we
write the group multiplication additively). Now if $u\in Z_x$ is another preimage of $\sigma(x)$ in 
$Z_x$ then $u=m+q$ for some $m\in A$, identified with the lattice in $H^1(S_g,\mathbb{R})$ defined
by the complex structure on the fiber $E_x$ of $E$. 
The path 
$t\to \tilde h(t,x)+m$ is the lift of $t\to h(t,x)$ through $u$. Thus 
the difference of the endpoints 
$\beta(x)=\tilde h(1,x)-\tilde h(0,x)\in H^1(S_g,\mathbb{R})$ does not depend on the
choice of the preimage $q$ of $\sigma(x)$ and hence only depends on $h$.  
Furthermore, the map $x\to \beta(x)$ is continuous and hence defines a section of 
$Z$ with $\eta=\beta(\sigma)$.
This is what we wanted to show.

Let now $C^\infty(W)$ and $C^\infty(Z)$ 
be the sheaf of smooth sections of $W$ and $Z$, respectively.
Since $W$ has a fiberwise structure of an
abelian group, these are sheaves of abelian groups. Similarly we define the sheaf
$C^\infty(H^1(S_g,\mathbb{Z}))$ of smooth sections of the fiber bundle with fiber
the group $A=H^1(S_g,\mathbb{Z})$ (this is meant to be the twisted bundle). 
We then obtain a short exact sequence of sheaves
\[0\to C^\infty(H^1(S_g,\mathbb{Z}))\to C^\infty(Z)\to C^\infty(W)\to 0.\]

It follows from the above discussion that $H^0(C^\infty(W))/H^0(C^{\infty}(Z))$ can naturally be identified with 
the group of homotopy classes of sections of $W$. On the other hand, 
the short exact sequence of sheaves induces 
a long exact cohomology sequence 
\[ \cdots \to H^0(C^{\infty}(Z))\to H^0(C^{\infty}(W))\to H^1(C^\infty(H^1(S_g,\mathbb{Z}))) \to H^1(C^\infty(Z))\to \cdots.\]

Since the sheaf $C^\infty(Z)$ is the sheaf of smooth sections of a flat vector bundle $Z\to B$, it is fine and hence
acyclic. Namely, a morphism of $C^\infty(Z)$ is a smooth section of the bundle 
$Z^*\otimes Z$ over $B$ whose fiber over $x$ equals the vector space of endomorphisms of $Z_x$, that is, it equals  
the vector space $Z_x^*\otimes Z_x$. This vector space has a distinguished real one-dimensional subspace consisting of
constant multiples of the identity, and these one-dimensional subspaces define a trivial one-dimensional real
subbundle $L$ of $Z^*\otimes Z$. A smooth section of $L$ can be identified with a smooth function on $B$.
The identity morphism corresponds to the real number 1 in this interpretation. 

Now if ${\cal U}=\{U_i\}$ is a locally finite covering of $B$, then there is a subordinate partition of unity, 
and using the identification of the fiber of the bundle $L$ with $\mathbb{R}$, this partition of unity defines a 
partition of unity for the sheaf $C^\infty(Z)$, showing that this sheaf fine and hence acyclic. Thus we obtain the 
short exact sequence
\[H^0(C^\infty(Z))\to H^0(C^\infty(W))\to H^1(C^\infty(H^1(S_g,\mathbb{Z})))\to 0.\]
But $H^1(C^\infty(H^1(S_g,\mathbb{Z}))=H^1(B,H^1(S_g,\mathbb{Z}))$ by de Rham's theorem which completes
the proof of the proposition.
\end{proof}

Let again $\sigma:B\to W$ be a smooth section. Then by Abel's theorem,
for every $x\in B$, the value $\sigma(x)$ of 
$\sigma$ at $x$ can be thought of as a holomorphic line bundle of degree $0$ on the fiber $E_x$ of $E$ at $x$
depending smoothly on $x$. Thus $\sigma$ defines a fiberwise holomorphic line bundle 
$L(\sigma)\to E$. 

For our next observation, let us denote by ${\cal F}$ the sheaf of smooth functions on $E$ whose 
restriction to a fiber is holomorphic, and let ${\cal F}^*$ be the subsheaf of functions which vanish nowhere.
These are sheaves of abelian groups. 

\begin{lemma}\label{sheafpara}
The cohomology group $H^1(E,{\cal F}^*)$ parameterizes smooth complex line bundles on $E$ whose restrictions to 
a fiber are holomorphic.
\end{lemma}
\begin{proof}
A smooth complex line bundle $L$ on $E$ whose restriction to a fiber is holomorphic is defined by some good cover
${\cal U}=\{U_i\mid i\}$ of $E$ with the property that for each $i$, the intersection of $U_i$ with a fiber is a disk or empty, and
smooth trivializations of $L$ on each of the open sets $U_i\in {\cal U}$ whose restrictions to the intersections of $U_i$ with a fiber
are holomorphic.

Then transition functions for $L$ on $U_i\cap U_j$ are smooth $\mathbb{C}^*$-valued functions on $U_i\cap U_j$ whose
restrictions to a fiber are holomorphic. Thus these functions define a one-cocycle for ${\cal U}$ with values in ${\cal F}^*$, and 
then they define a class in $H^1(E,{\cal F}^*)$.

Vice versa, each one-cocycle for ${\cal U}$ with values in ${\cal F}^*$ defines  a smooth fiberwise holomorphic line bundle on $E$
by gluing the trivial bundle over the sets $U_i$ with the transition functions on $U_i\cap U_j$ defined by the cocycle. Passing to 
cohomology yields the lemma.
\end{proof}

Smooth line bundles on 
$E$ are defined by classes in the cohomology group 
$H^1(E,(C^\infty)^*)$ where $C^\infty$ is the sheaf of smooth functions on 
$E$ and $(C^\infty)^*$ is the sheaf of smooth functions vanishing nowhere. 
The short exact sequence
\[\begin{tikzcd}
0\arrow[r] & \mathbb{Z}\arrow[r] & C^\infty\arrow[r,"\exp"] &(C^\infty)^* \arrow[r] &0\end{tikzcd}\]
then induces a long exact sequence in cohomology 
\[\begin{tikzcd} \cdots \arrow[r] &H^1(E,C^\infty)\arrow[r] &H^1(E,(C^\infty)^*)\arrow[r,"\delta"] &H^2(E,\mathbb{Z})\arrow[r]
 & \cdots 
\end{tikzcd}.\]
Since the sheaf $C^\infty$ is fine, this sequence 
describes explicitly the parameterization of the group of isomorphism classes of smooth line 
bundles on $E$ by their Chern classes, that is, by the group $H^2(E,\mathbb{Z})$.

Our next goal is to verify that homotopic sections of the bundle $W$ define smoothly
equivalent line bundles, or, equivalently, line bundles with the same Chern class, and 
that this Chern class is just the cohomology class in $H^1(B,H^1(S_g,\mathbb{Z}))$ corresponding
to this homotopy class by Proposition \ref{homotopysplit}.

To this end note that 
since the second cohomology of $E$ is representable, each class 
$\alpha\in H^2(E,\mathbb{Z})$ is the Chern class of a smooth complex line bundle, obtained as
the pull-back of the tautological line bundle
under a smooth map $f:E\to \mathbb{C}P^N$ 
for some sufficiently large $N$ 
which defines $\alpha$. Homotopic maps define isomorphic line bundles.

Now if $L\to E$ is a smooth complex line bundle, then the \emph{degree} of 
$L$ can be defined as the evaluation of its Chern class $c_1(L)$ on one 
(and hence on any) fiber.  Denote as before 
by $c_1(\nu)$ the Chern class of the vertical cotangent bundle. 
By Proposition \ref{surfacebundles}, the subgroup $H^1(B,H^1(S_g,\mathbb{Z}))$ of 
$H^2(E,\mathbb{Z})$ is contained in the kernel of the homomorphism
$\alpha\to \alpha\cup c_1(\nu)$. The restriction of this homomorphism 
to $\Pi^*H^2(B,\mathbb{Z})$ is injective. 
The next proposition provides the 
connection between the constructions in this section.

\begin{proposition}\label{parametrize}
Let $E\to B$ be a surface bundle over a surface. Then 
the cohomology group $H^1(B,H^1(S_g,\mathbb{Z}))\subset H^2(E,\mathbb{Z})$
parameterizes isomorphism classes of 
fiberwise holomorphic 
line bundles $L$ on $E$ of degree $0$ whose Chern class $c_1(L)$ satisfies
$c_1(L)\cup c_1(\nu)=0$. 
\end{proposition}
\begin{proof} By the above discussion, 
a smooth section $\sigma$ of the bundle $W$ defines
on the one hand an equivalence class of a complex line bundle 
$L(\sigma)$ on $E$ 
whose restrictions to a fiber is holomorphic of degree 0. On the other hand, by
Proposition \ref{homotopysplit} and Proposition \ref{surfacebundles}, 
it defines a cohomology class in
$H^1(B,H^1(S_g,\mathbb{Z}))\subset H^2(E,\mathbb{Z})$.
We have to show that this cohomology class is just
the first Chern class of $L(\sigma)$. As smooth line bundles on $E$ with the same
Chern class are equivalent, this implies that homotopic sections of $W$ define
smoothly equivalent line bundles on $E$, a fact which can also be verified 
directly.

Consider again the sheaf ${\cal F}$ of smooth functions on 
$E$ which are fiberwise holomorphic, the subsheaf
${\cal F}^*$ of functions in ${\cal F}$ which vanish
nowhere, and the sheaf $\mathbb{Z}$ of 
locally constant $\mathbb{Z}$-valued functions.

The short exact sequence
\begin{equation}\label{holomorphic}\begin{tikzcd}
0\arrow[r] & \mathbb{Z}\arrow[r] & {\cal F}\arrow[r,"\exp"] & {\cal F}^* \arrow[r] &0\end{tikzcd}\end{equation}
%
induces a long exact sequence in cohomology. 
Since the sheaf $C^\infty$ of smooth functions on $E$ is fine, 
the inclusions ${\cal F}\to C^\infty$ and ${\cal F}^*\to (C^\infty)^*$ then determine 
an exact commutative diagram 
\begin{equation}\label{diagram}\begin{tikzcd}
\cdots \arrow[r,"\exp"] &H^1(E,{\cal F}^*) \arrow[r]\arrow[d,"\eta"] & H^2(E, \mathbb{Z}) \arrow[r]\arrow[d,"{\rm Id}"] &
H^2(E,{\cal F}) \arrow[r]\arrow[d] & \cdots \\
\cdots \arrow[r,"\exp"] & H^1(E,(C^\infty)^*) 
\arrow[r,"\delta"] & H^2(E,\mathbb{Z})\arrow[r] & 0 \arrow[r] &\cdots
\end{tikzcd} \end{equation}

We claim that the homomorphism $\eta$ is surjective. By exactness and since the diagram commutes, this
follows if we can show that $H^2(E,{\cal F})=0$. However, if $E_x$ is any fiber of $E$ then
we have $H^2(E_x,{\cal O})=0$ where as usual, ${\cal O}$ is the sheaf of holomorphic functions on $E_x$.
Namely,  by Serre duality, this cohomology group can be identified with the space of holomorphic
two-forms on $E_x$, and this space vanishes since the complex dimension of $E_x$ equals one.
On the other hand, the sheaf of smooth functions on $B$ is fine and hence the Leray spectral sequence 
shows that indeed,  $H^2(E,{\cal F})=H^0(B,H^2(E_x,{\cal O}))=0$.

Since $H^1(E,C^\infty)=0$, the 
homomorphism $\delta$ is an isomorphism. This yields that every smooth line bundle
on $E$ is smoothly equivalent to a line bundle whose restriction to a fiber is holomorphic. 
It also follows that 
the homomorphism $\eta$ maps $H^1(E,{\cal F}^*)/\exp(H^1(E,{\cal F}))$ isomorphically onto 
$H^1(E,(C^\infty)^*)$. Hence associating to an element in this group its Chern class is an isomorphism.

On the other hand, for each $x\in B$ the vector space 
$H^1(E_x,{\cal O})$ is just the space of 
holomorphic one-forms on the fiber $E_x$ of $E$ over $x$ by Serre duality. 
That is, $H^1(E_x,{\cal F})$ is the fiber at $x$ of the Hodge bundle $Z\to B$.
Thus using the fact that the sheaf of smooth sections of $Z$ is fine (see the discussion in the proof of 
Proposition \ref{homotopysplit} for details),  
the Leray spectral sequence shows that $H^1(E,{\cal F})=H^0(Z)$, the vector space of smooth sections of $Z$.

Now consider the part 
\begin{equation}\label{diagram}\begin{tikzcd}
\cdots \arrow[r] &H^1(E,\mathbb{Z}) \arrow[r,"exp"]\arrow[d,"{\rm Id}"] & H^1(E,{\cal F}) \arrow[r]\arrow[d] &
H^1(E,{\cal F}^*) \arrow[r]\arrow[d,"\eta"] & \cdots \\
\cdots \arrow[r] & H^1(E,\mathbb{Z}) 
\arrow[r] & 0\arrow[r] & H^1(E,(C^\infty)^*) \arrow[r] &\cdots
\end{tikzcd} \end{equation}
of the above exact diagram. It shows that the kernel of $\eta$ can be identified with $H^0(Z)/\exp H^1(E,\mathbb{Z})$. 
By Proposition \ref{homotopysplit} and its proof, this subgroup is precisely the group of sections of 
the bundle $W$ which are 
homotopic to the trivial section. As a consequence, homotopic sections of $W$ define line bundles
with the same Chern class and hence line bundles which are smoothly equivalent. 
Furthermore, associating to a homotopy class of a section of $W$ 
the Chern class of the line bundle it defines is an isomorphism of the space of 
all homotopy classes of sections onto $H^1(B,H^1(S_g,\mathbb{Z}))\subset H^2(E,\mathbb{Z})$.
\end{proof}

\begin{remark}
The assumption that $E\to B$ is a surface bundle over a surface was only used through
the conclusion of Proposition \ref{surfacebundles}.
\end{remark}

\begin{remark}
The proof of Proposition \ref{parametrize} also shows that any smooth complex line
bundle on $E$ is smoothly equivalent to a line bundle whose restriction to each 
fiber is holomorphic. 
\end{remark}

We use similar ideas to show

\begin{proposition}\label{linebundle}
Let $E\to B$ be a surface bundle which admits a section.
Then there exists a cohomology class $e\in H^2(E,\mathbb{Z})$ with 
$(2g-2)e=c_1(\nu)$, and for all $q$ we have
\[H^q(E,\mathbb{Z})= H^q(B,\mathbb{Z})\oplus
  H^{q-1}(B,H^1(S_g,\mathbb{Z}))\oplus 
eH^{q-2}(B,\mathbb{Z}).\]
\end{proposition}
\begin{proof}
Let $f:B\to E$ be a section of the surface bundle $\Pi:E\to B$. 
Assume as before that each fiber $E_x$ of $E$ is equipped with
a complex structure varying smoothly with $x$. Then for each $x\in B$, the point $f(x)\in E_x$
can be thought of as a divisor in $E_x$ defining a complex line bundle $L_x$ of degree $1$ on $E_x$.
As these line bundles depend smoothly on $x$, they fit together to a fiberwise holomorphic line
bundle $L$ of fiberwise degree one. 

Let $c_1(L)\in H^2(E,\mathbb{Z})$ be the Chern class of $L$.
Consider the inclusion $\iota:E_x\to E$. As the fiberwise degree of $L$ equals one, 
we know that $\iota^*c_1(L)$ is a generator of $H^2(E_x,\mathbb{Z})$. 
Thus the spectral sequence argument in the proof of 
Proposition 3.1 of \cite{Mo87} applies to compute the cohomology of $E$ with coefficients in $\mathbb{Z}$
(this argument only uses surjectivity of $\iota^*$
for the  coefficient ring under consideration), yielding the formula in 
Proposition \ref{cohomologyprop} but with coefficients $\mathbb{Z}$. 
\end{proof}

\begin{remark} Although the existence of a
  section for a surface bundle $E\to B$ 
is simply equivalent to stating that the induced homomorphism 
$\pi_1(B)\to \Gamma_g$ lifts to a homomorphism $\pi_1(B)\to \Gamma_{g,1}$,  
we do not know how to characterize this property in purely topological terms of the 
surface bundle. In fact, if $E\to B$ is a surface bundle over a surface, then 
$E$ is bordant to a surface bundle over a surface which admits a section, see \cite{H83}. 
\end{remark} 

Proposition \ref{linebundle} describes a 
correspondence between line bundles on a surface
bundle over a surface $\Pi:E\to B$, their Chern classes and their Poincar\'e dual. This can be extended as follows. 
Namely, a section $f:B\to E$ can be thought of as a section in the bundle over $B$ whose fiber consists
of all effective divisors of degree $1$ on the fiber of $E$. This viewpoint generalizes as follows.

An effective divisor on a Riemann surface
$X$ of degree $k\geq 1$ is just a weighted collection of points on $X$ with positive weights
which sum up to $k$. Thus
 there is a natural
topology on the total space ${\cal D}^k$ of all effective 
divisors of degree $k$ on the fibers of $E$ 
defined as follows. Let $V\to B$ be the fiber product of 
$k$ copies of the fiber of $E$. There is a natural smooth fiber preserving free action of 
the symmetric group in $k$ variables on $V$. Then ${\cal D}^k$ can
be identified with the quotient of this action and hence it inherits from
$V$ the quotient topology. 
By abuse of notation, we denote again by $\Pi$ the projection
${\cal D}^k\to B$. 

Let us assume that there exists a section 
$\psi:B\to {\cal D}^k$. Associate to this section the fiberwise holomorphic line 
bundle $L(\psi)$ whose restriction of a fiber $E_x$ is dual to the divisor $\psi(x)$, 
and associate to $L(\psi)$ its Chern
class $c_1(L(\psi))\in H^2(E,\mathbb{Z})$.

Now the section $\psi$ of ${\cal D}^k$ defines a cycle in $E$ 
which can be seen as follows. The projection of the fat diagonal of $V$ is 
submanifold $N$ of ${\cal D}^k$ of fiberwise
positive real codimension $2$. Thus by transversality, 
we may assume that $\psi$ is transverse to this submanifold. 
Then there are (at most) finitely many points $x_1,\dots,x_m$ such that
$\psi(x_i)\in N$, and for each $i$,  the image of $x_i$ in $E_{x_i}$ consists of 
$m-1$ distinct points, with precisely one point of multiplicity $2$.

Choose a triangulation
of $B$ containing the points $x_i$ as vertices. For any point $x\not\in \{x_1,\dots, x_m\}$, 
the preimage of $x$ in $E_x$ defined by $\psi$ (that is, the union of all points of $\psi(x)$)
consists of precisely $m$ points moving smoothly
with the base. Thus each two-simplex
of the triangulation has precisely $k$ preimages in $E$, and the same holds true for 
all one-simplices. The preimages of the points $x_i$ consist of only $m-1$ points. 
It follows from this construction that the union of these triangles is a surface $\Sigma\subset E$. 
The orientation of $B$ induces an orientation of $\Sigma$. The restriction of the projection 
$\Pi$ to $\Sigma$ is a branched cover, ramified precisely at the points $x_i$.
Thus $\Sigma$ defines a homology class $\beta(\psi)\in H_2(E,\mathbb{Z})$. 
We have

\begin{proposition}\label{poincaredual}
The class $\beta(\psi)$ is Poincar\'e dual to $c_1(L(\psi))$.
\end{proposition}
\begin{proof} Let us recall how to construct from the embedded surface $\Sigma$ which is transverse to the 
fibers of $E$ a line bundle whose restriction to a fiber is holomorphic. 
Namely, for a point $x\in \Sigma$, choose a neighborhood $U$ of $x$ in $E$ so that
$U\cap \Sigma$ is a smooth disk. There exists a smooth $\mathbb{C}$-valued 
function $f$ on $U$ whose restriction to a fiber is  holomorphic and 
with nowhere vanishing derivative, so that $U\cap \Sigma$ is the level set of level zero. 
Choose a covering of $\Sigma$ by such sets, with corresponding functions. 
On the intersections of these sets, the quotients of these functions do not vanish. Thus these
functions define a cocycle whose cohomology class defines a line bundle. This
line bundle has a smooth section which is fiberwise holomorphic and vanishes precisely on $\Delta$.
In particular, this line bundle coincides with the line bundle $L(\psi)$.

Now if the section $\psi$ intersects the fat diagonal $N$ of ${\cal D}^k$, then at the finitely many intersection
points with $N$, choose the function so that it has a double zero at that point and proceed as before.  

Since every class in $H_2(E,\mathbb{Z})$ can be represented by a 
smooth map $f:M\to E$ where $M$ is a closed oriented surface of some genus $h\geq 0$, for the proof of the proposition it
now suffices to show the following. 

Assume without loss of generality that $f(M)$ intersects $\Sigma$ transversely in finitely many points
$y_1,\dots,y_p\in E-\cup_j\Pi^{-1}(x_j)$, with intersection index $\sigma(y_i)\in \pm 1$. We have to show that 
$c_1(L(\psi))(f(M))=\sum_i\sigma(y_i)$.

As the line bundle $L(\psi)$ is trivial on 
$E-\Sigma$,
the pull-back of $L(\psi)$ under $f$ is a complex line bundle on $M$ with a section 
which vanishes precisely to first order at the points $y_i$, and the index of this zero is $\pm 1$ depending on
whether the intersection is positive or negative.
On the other hand,  $c_1(f^*L(\psi))(M)$ equals the number of 
zeros of a section of $f^*L(\psi)$, counted with sign and multiplicities,
provided that this
section is transverse to the zero section.  
Together this means that 
$c_1(f^*L(\psi)))=\sum_i\sigma(y_i)$. Since 
$f$ was an aribtrary map of a closed oriented surface $M$ into $E$, we conclude that indeed, for any second homology 
class $\alpha\in H_2(E,\mathbb{Z)}$ we have $\alpha\cdot \Sigma=c_1(L(\psi))(\alpha)$ as claimed.    
\end{proof}

\bigskip
\noindent
MATHEMATISCHES INSTITUT DER UNIVERSIT\"AT BONN, 
ENDENICHER ALLEE 60, 
53115 BONN, 
GERMANY

\smallskip
\noindent 
e-mail: ursula@math.uni-bonn.de
\end{document}